\newtheorem{thm}{Theorem}[section]
\newtheorem{lemma}[thm]{Lemma}
\newenvironment{proof}{\hspace{0ex}\textsc{Proof}.\hspace{1ex}}{\hfill$\Box$\\[2ex] }
\DeclareMathOperator{\dd}{\mathrm{d\!}}
\DeclareMathOperator{\BE}{\mathbf{E}}
\DeclareMathOperator{\BF}{\mathcal{F}}
\DeclareMathOperator{\BG}{\mathcal{G}}
\DeclareMathOperator{\BP}{\mathbf{P}}
\DeclareMathOperator{\BX}{\mathcal{X}}
\DeclareMathOperator{\BY}{\mathcal{Y}}
\DeclareMathOperator{\BZ}{\mathcal{Z}}
\DeclareMathOperator{\BH}{\mathcal{H}}
\DeclareMathOperator{\R}{\mathbb{R}}
\DeclareMathOperator*{\argmin}{\mathrm{argmin}}
\begin{document}
\title{A Note on the Monge-Kantorovich Problem in the Plane}
\author{ Zuo Quan Xu\footnote{Department of Applied
Mathematics, Hong Kong Polytechnic University, Hong Kong. This author
acknowledges financial supports from Hong Kong Early Career Scheme (No. 533112), Hong Kong General Research Fund (No. 529711) and Hong Kong Polytechnic University. Email: \url{maxu@polyu.edu.hk}.
 }\ \ and Jia-An
Yan\footnote{Academy of Mathematics and Systems Science, CAS, China.
This author acknowledges financial supports from National Natural
Science Foundation of China (No. 11371350), Key Laboratory of Random
Complex Structures and Data Science, CAS (No. 2008DP173182), and
Department of Applied Mathematics, Hong Kong Polytechnic University,
during his visit in December 2012. Email: \url{jayan@amt.ac.cn}. } }
\maketitle
\begin{abstract}
The Monge-Kantorovich mass-transportation problem has been shown to be fundamental for various basic problems in analysis and geometry in recent years. Shen and Zheng (2010) proposed a probability method to transform the celebrated Monge-Kantorovich problem in a bounded region of the Euclidean plane into a Dirichlet boundary problem associated to a nonlinear elliptic equation. Their results are original and sound, however, their arguments leading to the main results are skipped and difficult to follow. In the present paper, we adopt a different approach and give a short and easy-followed detailed proof for their main results.
\\[3mm]
\noindent
\textbf{Keywords:} Monge-Kantorovich problem, transportation problem, calculus of variations, Dirichlet boundary problem
\end{abstract}

\section{Introduction}
\noindent The optimal transportation problem was first raised by Monge in 1781. Let $X$ and $Y$ be two separable metric spaces, and
$c : X \times Y \to [0, \infty]$ be a Borel-measurable function, where $c(x,y)$ is the cost of the transportation from $x$ to $y$.
Given probability measures $\mu$ on $X$ and $\nu$ on $Y$, Monge's formulation of the optimal transportation problem is to find a
transport map $T : X \to Y$ that realizes the infimum $$\inf\left\{\int_X c(x,T(x))\dd \mu(x):\ T^{-1}(\mu)=\nu\right\},$$
where $T^{-1}(\mu )=\nu$ means that $\nu(A)=\mu(T^{-1}(A))$ for every Borel set $A$ on $Y$. Sometimes one can write $\mu T^{-1}=\nu$. A map $T$ that attains this infimum is called an optimal transport map. Monge's formulation of the optimal transportation problem can be ill-posed, because sometimes there is no $T$ satisfying $T^{-1}(\mu)=\nu$. Kantorovich (1942) reformulated this problem as following: to find a probability measure $\gamma$ on $X\times Y$ that attains the infimum
$$\inf\left\{\int_{X\times Y} c(x,y)\dd \gamma(x,y):\ \gamma\in \Gamma(\mu, \nu)\right\},$$
where $\Gamma(\mu, \nu)$ denotes the collection of all probability measures on $X \times Y$ with marginals $\mu$ on $X$ and $\nu$ on $Y$. It is known that a minimizer for this problem always exists when the cost function $c$ is lower semi-continuous and $\Gamma(\mu, \nu)$ is a tight collection of measures. Such optimization problem is called Monge-Kantorovich problem.
\par
We consider a special case: $X$ and $Y$ are both one-dimensional Euclidian domains, and $c(x,y)=|x-y|^2$. Let $(\Omega, {\cal F},
\BP)$ be a non-atomic probability space. We denote by ${\cal L}(F,G)$ the set of all 2-dimensional random variables whose marginal distributions are $F$ and $G$, respectively. Then the Monge-Kantorovich problem can be reformulated as follows: to find an optimal coupling of $(X,Y)\in{\cal L}(F,G) $ such that $\BE[(X-Y)^2]$ attains the minimum. It is well-known and easily proved (see, eg. Rachev and R\"uschendorf (1998\textsc{a},\textsc{b})) that if $(\widetilde{X},\widetilde{Y})\in {\cal L}(F,G)$, and $\widetilde{X}$ and $\widetilde{Y}$ are comonotonic\footnote{Two real-valued random variables $X$ and $Y$ are said to be comonotonic if $(X(\omega')-X(\omega)) (Y(\omega')-Y(\omega))\geqslant 0$ almost surely under $\BP\otimes\BP$.}, then $(\widetilde{X}, \widetilde{Y})$ is an optimal coupling:
$$(\widetilde{X}, \widetilde{Y})=\argmin_{(X,Y)\in{\cal L}(F,G)}\BE[(X-Y)^2],$$
and the minimum value is
$$\min_{(X,Y)\in{\cal L}(F,G)}\BE[(X-Y)^2]=\int_0^1|F^{(-1)}(t)-G^{(-1)}(t)|^2\dd t,$$
where $F^{(-1)}(\cdot)$ and $G^{(-1)}(\cdot)$ denote the left-continuous inverse functions of $F(\cdot)$ and $G(\cdot)$, respectively.

\section{Main Idea of Shen and Zheng (2010)}
\noindent In Shen and Zheng (2010), they consider the
Monge-Kantorovich problem in the Euclidean plane. Given two
2-dimensional distribution functions $\BF$ and $\BG$, they try to
find an optimal coupling of $(\BX,\BY)$ whose marginal distributions
are $\BF$ and $\BG$, respectively, such that
$$\BE[|\BX-\BY|^2]=\BE[(X_1-Y_1)^2+(X_2-Y_2)^2]$$ attains the
minimum, where $\BX=(X_1, X_2)$ and $\BY=(Y_1,Y_2)$. Denote
$\BZ=(X_1, Y_2)$, then
$$\BE[|\BX-\BY|^2]=\BE[(X_1-Y_1)^2+(X_2-Y_2)^2]=\BE[|\BZ-\BY|^2]+[|\BX-\BZ|^2].$$
Assuming that random vectors $\BX$, $\BY$ and $\BZ$ all have smooth and
strictly positive density functions, with help of the random vector
$\BZ$, Shen and Zheng (2010) have successfully reduced the dimension of the decision variable by turning the original optimal coupling
problem on $(\BX, \BY)$ into an optimization problem on the distribution
of $\BZ$. In Shen and Zheng (2010), they assume that the random vectors take
values in a bounded region and reformulate the problem in a new
probability space $(\widetilde{\Omega},\widetilde{ {\cal F}}, \widetilde{\BP})$ with
$\widetilde{\Omega}=[0,1]\times[0,1]$ and $\widetilde{\BP}$ being the
Lebesgue measure. In fact, as we will see below, this restriction and reformulation are not needed.
\par
The main approach in Shen and Zheng (2010) consists two steps. First, for each fixed pair $(X_1, Y_2)$, they adopt a probability approach to find the best $X_2$ and $Y_1$ to minimize $\BE[(X_2-Y_2)^2]$ and $\BE[(X_1-Y_1)^2]$ under the constraint that the joint distribution of $(X_1, X_2)$ is the given distribution $\BF$ and that of $(Y_1, Y_2)$ is $\BG$. After this step, the optimal coupling problem boils down to an optimization problem over all possible joint probability density functions of $(X_1, Y_2)$. They then propose a calculus of variations method to solve the above optimization problem.
\par
As in Shen and Zheng (2010), our first step is to construct two
functions $g(\cdot,\cdot)$ and $h(\cdot,\cdot)$ satisfying $(X_1, g(X_1,Y_2))\sim(X_1,X_2)$
and $(h(X_1,Y_2), Y_2 )\sim (Y_1,Y_2)$. Here $X\sim Y$
means that $X$ and $Y$ have the same distribution.
\par
 Let $f(\cdot,\cdot)$ be the probability density function of the 2-dimensional random vector
 $\BX=(X_1, X_2)$. Then the conditional distribution
of $X_2$ given $X_1=x$ is
$$F_{X_2|X_1}(y|x)=\BP(X_2\leqslant y|X_1=x)=\frac{\int_{-\infty}^y
f( x,t) \dd t}{\int_{\R}f(x,t)\dd t}.$$ For each fixed $x$, denote the
inverse function of $F_{X_2|X_1}(\cdot|x)$ by $G(x,\cdot)$, that
is $F_{X_2|X_1}(G(x,\cdot)|x)=\cdot$.
Let $p(\cdot,\cdot)$ be the probability density function of the 2-dimensional random vector
$\BZ=(X_1, Y_2)$. Then the conditional distribution
of $Y_2$ given $X_1=x$ is
$$ F_{Y_2|X_1}(y|x)=\BP(Y_2\leqslant y|X_1=x)=\frac{\int_{-\infty}^y p(
x,t) \dd t}{\int_{\R}p(x,t)\dd t}=\frac{\int_{-\infty}^y p( x,t) \dd
t}{\int_{\R}f(x,t)\dd t},$$ where the last identity is due to the
fact that
$$\int_{\R}p(x,t)\dd t=F'_{X_1}(x)=\int_{\R}f(x,t)\dd t.$$
Now define $g(x,y)=G(x, F_{Y_2|X_1}(y|x)).$
\par
Similarly, we define $h(x,y)=\widetilde{G}( F_{X_1|Y_2}(x|y),y)$,
where, for each fixed $y$, $\widetilde{G}(\cdot, y)$ is the inverse function of
$$F_{Y_1|Y_2}(x|y)=\BP(Y_1\leqslant x|Y_2=y)=\frac{\int_{-\infty}^x
\widetilde{f}(u,y) \dd u}{\int_{\R}\widetilde{f}(u,y)\dd u}$$ and
$\widetilde{f}(\cdot,\cdot)$ is the probability density function of the 2-dimensional random vector
$\BY=(Y_1, Y_2)$. And $F_{X_1|Y_2}(x|y)$ is given by
 $$F_{X_1|Y_2}(x|y)=\BP(X_1\leqslant x|Y_2=y)=\frac{\int_{-\infty}^x p(u,y) \dd u}{\int_{\R}p(u,y)\dd u}
=\frac{\int_{-\infty}^x p(u,y) \dd u}{\int_{\R}\widetilde{f}(u,y)\dd u},$$ where the last identity is due to the
 fact that
 $$\int_{\R}p(u,y)\dd u=F'_{Y_2}(y)=\int_{\R} \widetilde{f}(u,y)\dd u.$$
\par
Let $ \widehat{\BX}=(X_1, g(X_1,Y_2))$ and $
\widehat{\BY}=(h(X_1,Y_2),Y_2)$. In Shen and Zheng (2010), the authors
claim that $\widehat{\BX}\sim \BX$ and $\widehat{\BY}\sim \BY$ without
giving a proof. For the reader's convenience, we give a proof here.
\par
In fact, for any bounded Borel function $B$ on $\R^2$, we have
\begin{multline*}
\BE[B(X_1,g(X_1,Y_2)) ]=\int_{\R}\int_{\R} B(x,g(x,y))p(x,y)\dd x\dd y \\
=\int_{\R}\int_{\R} B(x,G(x, F_{Y_2|X_1}(y|x)))p(x,y)\dd x\dd y
=\int_{\R}\int_{\R} B\left(x,G\left(x, \frac{\int_{-\infty}^y p( x,t) \dd t}{\int_{\R}f(x,t)\dd t}\right )\right)p(x,y)\dd x\dd y.
\end{multline*}
Applying change of variable $v=\frac{\int_{-\infty}^y p( x,t) \dd t}{\int_{\R}f(x,t)\dd t}$, we have
$\dd v=\frac{ p( x,y) }{\int_{\R}f(x,t)\dd t}\dd y$ and
\begin{multline*}
\BE[B(X_1,g(X_1,Y_2)) ]
=\int_{\R}\int_{\R} B\left(x,G\left(x, \frac{\int_{-\infty}^y p( x,t) \dd t}{\int_{\R}f(x,t)\dd t}\right )\right)p(x,y)\dd x\dd y\\
=\int_{\R}\int_{0}^1B\left(x,G\left(x, v\right )\right)\int_{\R}f(x,t)\dd t \dd v\dd x.
\end{multline*}
Applying change of variable $u=G\left(x, v\right )$, we obtain
$F_{X_2|X_1}(u|x)=F_{X_2|X_1}(G\left(x, v\right )|x)=v$, $\dd v
=F'_{X_2|X_1}(u|x)\dd u=\frac {f( x,u)}{\int_{\R}f(x,t)\dd t} \dd
u$, and
\begin{multline*}
\BE[B(X_1,g(X_1,Y_2)) ]=\int_{\R}\int_{\R}B\left(x,G\left(x, v\right )\right)\int_{\R}f(x,t)\dd t \dd v\dd x\\
=\int_{\R}\int_{\R} B\left(x,u\right )f( x,u)\dd u\dd x=\BE[B(X_1, X_2) ].
\end{multline*}
This indicates $\widehat{\BX}\sim \BX$. Similarly, one can prove
$\widehat{\BY}\sim \BY$.
\par
In Shen and Zheng (2010), they claim that `` if $(\BX, \BY)$ is the
optimal coupling, then the above vector $(\widehat{\BX}, \widehat{\BY})$
have the same optimal joint distribution. Thus we have
$\BE[|\BX-\BY|^2]=\BE[|\widehat{\BX}-\widehat{\BY}|^2]$. '' Unfortunately,
we are not able to prove that $(\BX, \BY)$ and $(\widehat{\BX},
\widehat{\BY})$ have the same optimal joint distribution. Fortunately,
we will show that if $(\BX, \BY)$ is the optimal coupling, then we do
have
$\BE[|X-Y|^2]=\BE[|\widehat{\BX}-\widehat{\BY}|^2]$, that is to say, $(\widehat{\BX},
\widehat{\BY})$ is an optimal coupling as well.
\par
In fact, given $Y_2=y$, the conditional distributions of $X_1$ and $Y_1$ are $F_{X_1|Y_2}(\cdot|y)$ and $F_{Y_1|Y_2}(\cdot|y)$, respectively. Therefore,
\begin{align*}
 \BE[(X_1-Y_1)^2|Y_2=y]\geqslant \inf_{(X, Y)\in\mathcal{B}}\BE[(X-Y)^2],
\end{align*}
where $\mathcal{B}$ is the set of all 2-dimensional random vectors whose marginal distributions are
$F_{X_1|Y_2}(\cdot|y)$ and $F_{Y_1|Y_2}(\cdot|y)$, respectively. If $(\widetilde {X}, \widetilde {Y})\in\mathcal{B}$ and $\widetilde{X}$ and $\widetilde{Y}$ are comonotonic, then $(\widetilde {X}, \widetilde {Y})$ is an optimal coupling:
\begin{align*}
 \inf_{(X, Y)\in\mathcal{B}}\BE[(X-Y)^2]=\BE[(\widetilde{X}-\widetilde{Y})^2].
\end{align*}
It is an easy exercise to show that $(\widetilde {X}, \widetilde {Y})\in\mathcal{B}$ and $\widetilde{X}$ and $\widetilde{Y}$ are comonotonic if and only if $\widetilde{Y}=f(\widetilde{X})$,
where $$f(x)=F_{\widetilde{Y} }^{(-1)}(F_{\widetilde{X} }(x))=F_{Y_1|Y_2}^{(-1)}(F_{X_1|Y_2}(x|y)|y)=\widetilde{G}( F_{X_1|Y_2}(x|y),y)=h(x,y),$$
and $F_{\widetilde{Y} }^{(-1)}(\cdot)$ and $F_{Y_1|Y_2}^{(-1)}(\cdot|y)$ denote the left-continuous inverse functions of $F_{\widetilde{Y} }(\cdot)$ and $F_{Y_1|Y_2}(\cdot|y)$, respectively.
Therefore,
\begin{multline*}
 \BE[(X_1-Y_1)^2|Y_2=y]\geqslant \inf_{(X, Y)\in\mathcal{B}}\BE[(X-Y)^2]=\BE[(\widetilde{X}-\widetilde{Y})^2]=\BE[(\widetilde{X}-h(\widetilde{X}, y))^2]\\
=\BE[(X_1-h(X_1, y))^2|Y_2=y]=\BE[(X_1-h(X_1, Y_2))^2|Y_2=y],
\end{multline*}
where we used the fact that the conditional distribution of $X_1$
given $Y_2=y$ is the same as the distribution of $\widetilde{X}$,
that is $F_{X_1|Y_2}(\cdot|y)$. Now we obtain
\begin{align*}
\BE[(X_1-Y_1)^2]=\BE[\BE[(X_1-Y_1)^2|Y_2]]\geqslant \BE[\BE[(X_1-h(X_1,Y_2))^2| Y_2]]=\BE[(X_1-h(X_1,Y_2))^2].
\end{align*}
Similarly, one can prove that
\begin{align*}
\BE[(X_2-Y_2)^2]=\BE[\BE[(X_2-Y_2)^2|X_1]]\geqslant \BE[\BE[(g(X_1,Y_2)-Y_2)^2|X_1]]=\BE[(g(X_1,Y_2)-Y_2)^2].
\end{align*}
Adding them up, we get
\begin{multline*}
 \BE[|\BX-\BY|^2]=\BE[(X_1-Y_1)^2]+\BE[(X_2-Y_2)^2] \\
 \geqslant \BE[(X_1-h(X_1,Y_2))^2]+\BE[(g(X_1,Y_2)-Y_2)^2]=\BE[|\widehat{\BX}-\widehat{\BY}|^2].
\end{multline*}
Thus, we proved that if $(\BX, \BY)$ is an optimal coupling, so is $(\widehat{\BX}, \widehat{\BY})$.
\par
Note that
\begin{multline*}
\BE[|\widehat{\BX}-\widehat{\BY}|^2]=\BE[(X_1-\widetilde{G}(F_{X_1|Y_2}(X_1|Y_2), Y_2) )^2]+\BE[(G(X_1,F_{Y_2|X_1}(Y_2|X_1) )-Y_2)^2]\\
=\int_{\R}\int_{\R}\left(s-\widetilde{G}\left(\frac{\int_{-\infty}^s p(u,y) \dd u}{\int_{\R}\widetilde{f}(u,y)\dd u},y \right)\right)^2p(s,y)\dd s\dd y\\
+\int_{\R}\int_{\R}\left(t-G\left(x, \frac{\int_{-\infty}^t p(x,v)
\dd v}{\int_{\R}f(x,v)\dd v}\right ) \right)^2p(x,t)\dd t\dd x.
\end{multline*}
The optimal coupling problem in the Euclidean plane boils down to minimizing the right hand side of
the above identity over $\BH$, the set of all the probability density functions $p(\cdot,\cdot)$ satisfying
$\int_{\R}p(\cdot,t)\dd t=\int_{\R}f(\cdot,t)\dd t$ and
$\int_{\R}p(u,\cdot)\dd u=\int_{\R}\widetilde{f}(u,\cdot)\dd u$.
\par
In Shen and Zheng (2010), they propose a calculus of variations
method to solve the above optimization problem. However, their
arguments are skipped and difficult to follow. The main objective of
this note is to modify their method and give a detailed proof for
their main results.

\section{Solving the Problem: Calculus of Variations}
\begin{lemma}\label{lemma1}
If $\beta$ is continuous  in a neighbourhood of   $(a,b)\in\R^2$, then
\begin{align*}
 \lim_{ \varepsilon\to 0 } \frac{1}{\varepsilon^2}\int_{b}^{b+\varepsilon} \int_{a}^{a+\varepsilon}\beta(x,y)\dd x\dd y=\beta(a,b).
\end{align*}
\end{lemma}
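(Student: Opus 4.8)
The plan is to read this as the statement that the average value of $\beta$ over the small square $[a,a+\varepsilon]\times[b,b+\varepsilon]$ converges, as the square shrinks to its corner, to the value $\beta(a,b)$ there; this is a direct consequence of continuity and requires no differentiation theory. First I would subtract off the constant $\beta(a,b)$, using the trivial identity $\frac{1}{\varepsilon^2}\int_{b}^{b+\varepsilon}\int_{a}^{a+\varepsilon}\beta(a,b)\dd x\dd y=\beta(a,b)$ (the integrand is constant and the signed area of the region is $\varepsilon^2$), so that the quantity to be controlled becomes
$$\frac{1}{\varepsilon^2}\int_{b}^{b+\varepsilon}\int_{a}^{a+\varepsilon}\bigl(\beta(x,y)-\beta(a,b)\bigr)\dd x\dd y.$$

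Second, I would invoke continuity of $\beta$ at $(a,b)$: given $\delta>0$, there exists $\eta>0$ such that $|\beta(x,y)-\beta(a,b)|<\delta$ whenever $|x-a|<\eta$ and $|y-b|<\eta$. For $0<|\varepsilon|<\eta$, every point $(x,y)$ in the square of integration satisfies $|x-a|\leqslant|\varepsilon|<\eta$ and $|y-b|\leqslant|\varepsilon|<\eta$, so the integrand is bounded in modulus by $\delta$. Passing the absolute value inside the integral and using that the unsigned area of the square equals $\varepsilon^2$ then gives
$$\left|\frac{1}{\varepsilon^2}\int_{b}^{b+\varepsilon}\int_{a}^{a+\varepsilon}\bigl(\beta(x,y)-\beta(a,b)\bigr)\dd x\dd y\right|\leqslant\frac{1}{\varepsilon^2}\cdot\varepsilon^2\cdot\delta=\delta.$$
Since $\delta>0$ is arbitrary, this forces the original average to converge to $\beta(a,b)$, which is exactly the claim.

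The only point that calls for a little care — and the nearest thing to an obstacle — is the sign of $\varepsilon$ when the limit is understood two-sidedly: for $\varepsilon<0$ the iterated integral runs over reversed intervals in both variables, but the two orientation sign-changes cancel and the normalizing factor $\varepsilon^2=|\varepsilon|^2$ is unaffected, so both the averaging interpretation and the estimate above go through verbatim. Everything else is routine $\varepsilon$–$\delta$ bookkeeping; in particular, since we only compare $\beta$ against its value at the fixed point $(a,b)$, plain continuity at $(a,b)$ suffices and no compactness or uniform-continuity argument is needed.
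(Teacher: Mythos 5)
Your proof is correct. It takes a more hands-on route than the paper, which disposes of this lemma in one line by invoking the mean value theorem (for integrals): the average of the continuous function $\beta$ over the small square equals $\beta(\xi_\varepsilon,\zeta_\varepsilon)$ at some point of the square, and that point tends to $(a,b)$ as $\varepsilon\to 0$, so continuity finishes the argument. Your version instead subtracts the constant $\beta(a,b)$ and runs the $\varepsilon$--$\delta$ estimate directly, which is essentially an unpacking of what the mean value theorem hides: it is more self-contained (no appeal to the two-dimensional integral mean value theorem, which itself rests on the intermediate value theorem and connectedness of the square), and it makes explicit the bookkeeping for negative $\varepsilon$ --- the two orientation sign changes in the iterated integral cancel while the normalizer $\varepsilon^2$ stays positive --- a point worth noting since the lemma is stated with a two-sided limit $\varepsilon\to 0$ even though it is only applied later with $\varepsilon\to 0+$. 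The paper's citation buys brevity; your argument buys transparency and covers the stated two-sided limit without any gap.
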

\begin{proof}
This follows immediately from the mean value theorem.
\end{proof}
\begin{lemma}\label{mainlemma}
If $\beta$ is  second order continuously differentiable  in a neighbourhood of   $(a,b)\in\R^2$, then
\begin{align*}
\lim_{b_1\to b+}\lim_{a_1\to a+}\lim_{ \varepsilon\to 0+} \frac{1}{\varepsilon^2(a_1-a)(b_1-b)} \int_{\R} \int_{\R} \beta(x,y)\eta_{\varepsilon}(x,y)\dd x\dd y=\beta_{xy}(a,b),
\end{align*}
where
\begin{align}\label{eta}
 \eta_{\varepsilon}(x,y)=(\mathbf{1}_{[a_1, a_1+\varepsilon ]}(x)-\mathbf{1}_{[a, a+\varepsilon ] }(x))(\mathbf{1}_{[b_1, b_1+\varepsilon]}(y)-\mathbf{1}_{[b, b+\varepsilon]} (y)),\quad  a<a_1, \; b<b_1.
\end{align}
\end{lemma}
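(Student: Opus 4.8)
The plan is to expand the product $\eta_\varepsilon$ into four indicator terms, dispose of the innermost limit in $\varepsilon$ by appealing to Lemma \ref{lemma1} four times, and then recognise what remains as a mixed second-order difference quotient that collapses to $\beta_{xy}(a,b)$ under the two outer limits. The whole computation is elementary once the structure is laid bare; the role of the hypothesis is simply to supply the two differentiations needed at the last stage.

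First I would write out
$$\eta_\varepsilon(x,y) = \mathbf{1}_{[a_1,a_1+\varepsilon]}(x)\mathbf{1}_{[b_1,b_1+\varepsilon]}(y) - \mathbf{1}_{[a,a+\varepsilon]}(x)\mathbf{1}_{[b_1,b_1+\varepsilon]}(y) - \mathbf{1}_{[a_1,a_1+\varepsilon]}(x)\mathbf{1}_{[b,b+\varepsilon]}(y) + \mathbf{1}_{[a,a+\varepsilon]}(x)\mathbf{1}_{[b,b+\varepsilon]}(y),$$
so that $\int_{\R}\int_{\R}\beta\,\eta_\varepsilon\,\dd x\,\dd y$ splits into four integrals over the squares $[a',a'+\varepsilon]\times[b',b'+\varepsilon]$ with corners $(a',b')\in\{(a_1,b_1),(a,b_1),(a_1,b),(a,b)\}$. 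Since $\beta$ is $C^2$, hence continuous, in a neighbourhood of $(a,b)$, for $a_1,b_1$ close enough to $a,b$ and $\varepsilon$ small all four squares lie in that neighbourhood, so Lemma \ref{lemma1} applies to each. Dividing by $\varepsilon^2$ and letting $\varepsilon\to 0+$ therefore yields
$$\lim_{\varepsilon\to 0+}\frac{1}{\varepsilon^2}\int_{\R}\int_{\R}\beta(x,y)\eta_\varepsilon(x,y)\,\dd x\,\dd y = \beta(a_1,b_1) - \beta(a,b_1) - \beta(a_1,b) + \beta(a,b).$$

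It remains to handle the two outer limits. Dividing the right-hand side by $(a_1-a)(b_1-b)$ and regrouping, it equals
$$\frac{1}{b_1-b}\left(\frac{\beta(a_1,b_1)-\beta(a,b_1)}{a_1-a} - \frac{\beta(a_1,b)-\beta(a,b)}{a_1-a}\right).$$
Letting $a_1\to a+$ and using that $\beta$ is differentiable in its first argument, each difference quotient converges to the corresponding value of $\beta_x$, leaving $\big(\beta_x(a,b_1)-\beta_x(a,b)\big)/(b_1-b)$. Finally letting $b_1\to b+$ and using that $\beta_x$ is itself differentiable in its second argument --- which is exactly where the second-order smoothness of $\beta$ is needed --- this last quotient converges to $\beta_{xy}(a,b)$, as claimed.

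The argument is essentially routine; the only points that require care are the bookkeeping of the four corner terms and the order in which the limits are taken. Because the limits are iterated rather than joint, no uniformity is needed: each stage is an ordinary one-variable limit, with $C^1$ regularity of $\beta$ in $x$ invoked at the $a_1$-stage and $C^1$ regularity of $\beta_x$ in $y$ (equivalently the full $C^2$ hypothesis on $\beta$) invoked at the $b_1$-stage. I do not anticipate any genuine obstacle beyond verifying that the four squares remain inside the neighbourhood of $(a,b)$ so that Lemma \ref{lemma1} is legitimately applicable.
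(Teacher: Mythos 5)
Your proposal is correct and follows essentially the same route as the paper: expand $\eta_\varepsilon$ into the four corner squares, apply Lemma \ref{lemma1} to each to get the second-order difference $\beta(a_1,b_1)-\beta(a,b_1)-\beta(a_1,b)+\beta(a,b)$, then resolve the two outer limits as iterated difference quotients giving $\beta_x(a,b_1)-\beta_x(a,b)$ and finally $\beta_{xy}(a,b)$. Your added remark that the four squares must lie inside the neighbourhood where $\beta$ is smooth is a point the paper leaves implicit, but it is the identical argument.
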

\begin{proof}
Note
\begin{align*}
 \int_{\R} \int_{\R} \beta(x,y)\eta_{\varepsilon}(x,y)\dd x\dd y=\int_{b}^{b+\varepsilon} \int_{a}^{a+\varepsilon}+\int_{b_1}^{b_1+\varepsilon} \int_{a_1}^{ a_1+\varepsilon}-\int_{b}^{b+\varepsilon} \int_{a_1}^{a_1+\varepsilon}-\int_{b_1}^{b_1+\varepsilon} \int_{a}^{a+\varepsilon} \beta\dd x\dd y.
\end{align*}
Applying Lemma \ref{lemma1} to each term above, we obtain
\begin{align*}
 \lim_{ \varepsilon\to 0+} \frac{1}{\varepsilon^2} \int_{\R} \int_{\R} \beta(x,y)\eta_{\varepsilon}(x,y)\dd x\dd y=\beta(a,b)+\beta(a_1,b_1)-\beta(a_1,b)-\beta(a,b_1).
\end{align*}
Therefore,
\begin{multline*}
\lim_{a_1\to a+}\lim_{ \varepsilon\to 0+} \frac{1}{\varepsilon^2(a_1-a) } \int_{\R} \int_{\R} \beta(x,y)\eta_{\varepsilon}(x,y)\dd x\dd y\\
=\lim_{a_1\to a+}\frac{\beta(a,b)+\beta(a_1,b_1)-\beta(a_1,b)-\beta(a,b_1)}{a_1-a}=\beta_x(a,b_1)-\beta_x(a,b),
\end{multline*}
which then follows
\begin{multline*}
\lim_{b_1\to b+}\lim_{a_1\to a+}\lim_{ \varepsilon\to 0+} \frac{1}{\varepsilon^2(a_1-a)(b_1-b)} \int_{\R} \int_{\R} \beta(x,y)\eta_{\varepsilon}(x,y)\dd x\dd y\\
=\lim_{b_1\to b+}\frac{\beta_x(a,b_1)-\beta_x(a,b)}{b_1-b}=\beta_{xy}(a,b).
\end{multline*}
The proof is complete.
\end{proof}
\par
Our objective is to minimize the functional $L(p)$ over $\BH$, where
\begin{multline*}
L(p)=\int_{\R} \int_{\R}\left(t-G\left(x,\int_{-\infty}^t \frac{p(x,v)}{f_1(x)}\dd v\right)\right)^2p(x,t)\dd t\dd x\\
+\int_{\R} \int_{\R}\left(s-\widetilde{G}\left( \int_{-\infty}^s \frac{p(u,y)}{f_2(y)}\dd u, y\right)\right)^2p(s,y)\dd s\dd y,
\end{multline*}
and
\begin{align*}
f_1(x)=\int_{\R}f(x,v)\dd v, \quad
f_2(y)=\int_{\R}\widetilde{f}(u,y)\dd u.
\end{align*}
\par
Suppose $p>0$ minimizes the functional $L(\cdot)$ over $\BH$. Let $\eta$ be any bounded function with compact support on ${\R}^2$ satisfying
\[\int_{\R} \eta(x,\cdot)\dd x=\int_{\R} \eta(\cdot,y)\dd y=0.\]
Then $p_{\varepsilon}:=p+\varepsilon \eta \in\BH$, when $\varepsilon$ is small enough.
Because $p_0=p$ minimizes the functional $L(\cdot)$ over $\{p_{\varepsilon}\}\cap\BH$, the first order condition reads
\begin{multline}\label{fristorder}
\frac{\partial}{\partial \varepsilon}\bigg[\int_{\R} \int_{\R}\left(t-G\left(x,\int_{-\infty}^t \frac{p_{\varepsilon}(x,v)}{f_1(x)}\dd v\right)\right)^2p_{\varepsilon}(x,t)\dd t\dd x \\
+\int_{\R} \int_{\R}\left(s-\widetilde{G}\left( \int_{-\infty}^s \frac{p_{\varepsilon}(u,y)}{f_2(y)}\dd u, y\right)\right)^2p_{\varepsilon}(s,y)\dd s\dd y\bigg]\bigg|_{\varepsilon=0}=0.
\end{multline}
Let us compute the first term in \eqref{fristorder},
\begin{align*}
&\hspace{3mm}\frac{\partial}{\partial \varepsilon}\bigg[\int_{\R} \int_{\R}\left(t-G\left(x,\int_{-\infty}^t \frac{p_{\varepsilon}(x,v)}{f_1(x)}\dd v\right)\right)^2p_{\varepsilon}(x,t)\dd t\dd x\bigg]\bigg|_{\varepsilon=0}\\
&=\int_{\R} \int_{\R}\frac{\partial}{\partial \varepsilon}\bigg[\left(t-G\left(x,\int_{-\infty}^t \frac{p_{\varepsilon}(x,v)}{f_1(x)}\dd v\right)\right)^2\bigg]\bigg|_{\varepsilon=0} p(x,t)\dd t\dd x\\
&\hspace{3mm}+\int_{\R} \int_{\R}\left(t-G\left(x,\int_{-\infty}^t \frac{p(x,v)}{f_1(x)}\dd v\right)\right)^2\eta(x,t)\dd t\dd x \\
&=\int_{\R} \int_{\R} 2\left(t-G\left(x,\int_{-\infty}^t \frac{p (x,v)}{f_1(x)}\dd v\right)\right) \left(-G_y\left(x,\int_{-\infty}^t \frac{p (x,v)}{f_1(x)}\dd v\right)\right)\left( \int_{-\infty}^t \frac{\eta (x,v)}{f_1(x)}\dd v \right) p(x,t)\dd t\dd x\\
&\hspace{3mm}+\int_{\R} \int_{\R}\left(t-G\left(x,\int_{-\infty}^t \frac{p(x,v)}{f_1(x)}\dd v\right)\right)^2\eta(x,t)\dd t\dd x \\
&=\int_{\R} \int_{\R}\left(\int_v^{\infty} 2\left(t-G\left(x,\int_{-\infty}^t \frac{p (x,v)}{f_1(x)}\dd v\right)\right) \left(-G_y\left(x,\int_{-\infty}^t \frac{p (x,v)}{f_1(x)}\dd v\right)\right) \frac{p(x,t) }{f_1(x)} \dd t\right) \eta (x,v)\dd v\dd x\\
&\hspace{3mm}+\int_{\R} \int_{\R}\left(t-G\left(x,\int_{-\infty}^t \frac{p(x,v)}{f_1(x)}\dd v\right)\right)^2\eta(x,t)\dd t\dd x \\
&=\int_{\R} \int_{\R}\varphi(x,y)\eta(x,y)\dd y\dd x,
\end{align*}
where
\begin{align*}
 \varphi(x,y)&=\int_y^{\infty} 2\left(t-G\left(x,\int_{-\infty}^t \frac{p (x,u)}{f_1(x)}\dd u\right)\right) \left(-G_y\left(x,\int_{-\infty}^t \frac{p (x,u)}{f_1(x)}\dd u\right)\right) \frac{p(x,t) }{f_1(x)} \dd t\\
 &\hspace{3mm}+\left(y-G\left(x,\int_{-\infty}^y \frac{p(x,u)}{f_1(x)}\dd u\right)\right)^2.
\end{align*}
Similarly,
\begin{align*}
 \frac{\partial}{\partial \varepsilon}\bigg[
\int_{\R} \int_{\R}\left(s-\widetilde{G}\left( \int_{-\infty}^s \frac{p_{\varepsilon}(u,y)}{f_2(y)}\dd u, y\right)\right)^2p_{\varepsilon}(s,y)\dd s\dd y\bigg]\bigg|_{\varepsilon=0} &=\int_{\R} \int_{\R} \psi (x,y)\eta(x,y)\dd y\dd x,
\end{align*}
where
\begin{align*}
 \psi(x,y)&=\int_x^{\infty} 2\left(s-\widetilde{G}\left( \int_{-\infty}^s \frac{p ( u,y)}{f_2(y)}\dd u,y\right)\right) \left(-\widetilde{G}_x\left(\int_{-\infty}^s \frac{p (u,y)}{f_2(y)}\dd u,y\right)\right) \frac{p(s,y) }{f_2(y)} \dd s\\
 &\hspace{3mm}+\left(x-\widetilde{G}\left( \int_{-\infty}^x \frac{p(u,y)}{f_2(y)}\dd u,y\right)\right)^2.
\end{align*}
Applying the first order condition \eqref{fristorder}, we deduce that
\begin{align*}
 \int_{\R} \int_{\R}( \varphi (x,y)+\psi(x,y))\eta(x,y)\dd y\dd x=0.
\end{align*}
Let us take $\eta\equiv \eta_{\varepsilon}$ defined in \eqref{eta} in the above equation. It then follows from Lemma \ref{mainlemma} that
\begin{multline*}
0=\lim_{b_1\to b+}\lim_{a_1\to a+}\lim_{ \varepsilon\to 0+} \frac{1}{\varepsilon^2(a_1-a)(b_1-b)} \int_{\R} \int_{\R}(\varphi (x,y)+ \psi(x,y))\eta_{\varepsilon}(x,y)\dd y\dd x\\={\varphi}_{xy}(a,b)+{\psi}_{xy}(a,b).
\end{multline*}
It is not hard to show
\begin{align*}
 {\varphi}_y(x,y)&=-2\left(y-G\left(x,\int_{-\infty}^y \frac{p (x,u)}{f_1(x)}\dd u\right)\right) \left(-G_y\left(x,\int_{-\infty}^y\frac{p (x,u)}{f_1(x)}\dd u\right)\right) \frac{p(x,y) }{f_1(x)} \\
 &\hspace{3mm}+2\left(y-G\left(x,\int_{-\infty}^y \frac{p(x,u)}{f_1(x)}\dd u\right)\right) \left(1-G_y\left(x,\int_{-\infty}^y \frac{p(x,u)}{f_1(x)}\dd u\right)\frac{p(x,y) }{f_1(x)}\right)\\
 &=2\left(y-G\left(x,\int_{-\infty}^y \frac{p(x,u)}{f_1(x)}\dd u\right)\right),
\end{align*}
and
\begin{align*}
 {\psi}_x(x,y)&=-2\left(x-\widetilde{G}\left( \int_{-\infty}^x \frac{p ( u,y)}{f_2(y)}\dd u,y\right)\right) \left(-\widetilde{G}_x\left(\int_{-\infty}^x \frac{p (u,y)}{f_2(y)}\dd u,y\right)\right) \frac{p(x,y) }{f_2(y)} \\
 &\hspace{3mm}+2 \left(x-\widetilde{G}\left( \int_{-\infty}^x \frac{p(u,y)}{f_2(y)}\dd u,y\right)\right) \left(1-\widetilde{G}_x\left( \int_{-\infty}^x \frac{p(u,y)}{f_2(y)}\dd u,y\right)\frac{p(x,y)}{f_2(y)}\right)\\
 &=2 \left(x-\widetilde{G}\left( \int_{-\infty}^x \frac{p(u,y)}{f_2(y)}\dd u,y\right)\right).
\end{align*}
Now we deduce the main theorem in Shen and Zheng (2010).
\begin{thm} Suppose $p>0$ minimizes $L(\cdot)$ over $\BH$. Denote $H(x,y)=\int_{-\infty}^x\int_{-\infty}^y p(u,v)\dd v\dd u$. Then
\begin{align*}
 \frac{\partial}{\partial x }\bigg[ G\left(x, \frac{1}{f_1(x)}H_x(x,y)\right) \bigg]
+\frac{\partial}{\partial y }\bigg[ \widetilde{G}\left( \frac{1}{f_2(y)} H_{y}(x,y),y \right) \bigg]=0.
\end{align*}
Moreover,
\begin{align*}
H(x,-\infty)&=0, & H(-\infty, y)&=0, \\
H (x,+\infty)&=\int_{-\infty}^xf_1(u)\dd u, & H (+\infty,
y)&=\int_{-\infty}^yf_2(v)\dd v.
\end{align*}
\end{thm}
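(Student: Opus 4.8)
The plan is to observe that the theorem merely assembles the identities already obtained for $\varphi$ and $\psi$ together with the first-order condition $\varphi_{xy}(a,b)+\psi_{xy}(a,b)=0$. The crucial point is that the integral expressions sitting inside $G$ and $\widetilde{G}$ are exactly the normalized partial derivatives of $H$. First I would differentiate $H(x,y)=\int_{-\infty}^x\int_{-\infty}^y p(u,v)\dd v\dd u$ under the integral sign to get $H_x(x,y)=\int_{-\infty}^y p(x,v)\dd v$ and $H_y(x,y)=\int_{-\infty}^x p(u,y)\dd u$, whence $\frac{1}{f_1(x)}H_x(x,y)=\int_{-\infty}^y \frac{p(x,v)}{f_1(x)}\dd v$ and $\frac{1}{f_2(y)}H_y(x,y)=\int_{-\infty}^x \frac{p(u,y)}{f_2(y)}\dd u$. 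These coincide with the arguments of $G$ and $\widetilde{G}$ in the previously computed formulas for $\varphi_y$ and $\psi_x$.

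Using this, I would rewrite the two simplified first derivatives as
\[
\varphi_y(x,y)=2\Big(y-G\big(x,\tfrac{1}{f_1(x)}H_x(x,y)\big)\Big),\qquad
\psi_x(x,y)=2\Big(x-\widetilde{G}\big(\tfrac{1}{f_2(y)}H_y(x,y),y\big)\Big).
\]
Because the standing smoothness hypotheses make $\varphi$ and $\psi$ twice continuously differentiable, Schwarz's theorem guarantees $\varphi_{xy}=\partial_x\varphi_y$ and $\psi_{xy}=\partial_y\psi_x$, so that the mixed partials occurring in Lemma \ref{mainlemma} are unambiguous. Differentiating the two displayed expressions then gives $\varphi_{xy}(x,y)=-2\,\partial_x\big[G(x,\tfrac{1}{f_1(x)}H_x(x,y))\big]$ and $\psi_{xy}(x,y)=-2\,\partial_y\big[\widetilde{G}(\tfrac{1}{f_2(y)}H_y(x,y),y)\big]$. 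Substituting these into $\varphi_{xy}(a,b)+\psi_{xy}(a,b)=0$, valid at every $(a,b)$, and dividing by $-2$ produces exactly the announced elliptic equation.

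For the boundary conditions I would evaluate $H$ directly from its definition. Sending $y\to-\infty$ or $x\to-\infty$ makes the inner integral vanish, giving $H(x,-\infty)=0$ and $H(-\infty,y)=0$. Sending $y\to+\infty$ and invoking the marginal constraint $\int_{\R}p(x,v)\dd v=\int_{\R}f(x,v)\dd v=f_1(x)$ that defines $\BH$ gives $H(x,+\infty)=\int_{-\infty}^x f_1(u)\dd u$; by the symmetric constraint $\int_{\R}p(u,y)\dd u=\int_{\R}\widetilde{f}(u,y)\dd u=f_2(y)$ I similarly obtain $H(+\infty,y)=\int_{-\infty}^y f_2(v)\dd v$.

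The derivation is short, and its entire content is the identification in the first paragraph linking $H_x,H_y$ to the arguments of $G,\widetilde{G}$; everything else is the chain rule and the already-established first-order condition. The only point demanding care is the legitimacy of the manipulations --- differentiation under the integral sign, the $C^2$ regularity needed for Schwarz's theorem, and the applicability of Lemma \ref{mainlemma} --- all of which are underwritten by the smoothness and strict positivity ($p>0$) assumptions that keep $G$ and $\widetilde{G}$ smooth. I therefore expect no genuine obstacle, only bookkeeping.
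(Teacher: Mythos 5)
Your proposal is correct and follows essentially the same route as the paper: the theorem is exactly the assembly of the first-order condition $\varphi_{xy}+\psi_{xy}=0$ (obtained via Lemma \ref{mainlemma}) with the computed expressions for $\varphi_y$ and $\psi_x$, after identifying $\frac{1}{f_1(x)}H_x$ and $\frac{1}{f_2(y)}H_y$ with the inner integrals, and reading off the boundary conditions from the marginal constraints defining $\BH$. Your explicit appeal to Schwarz's theorem to equate $\partial_x\varphi_y$ with the mixed partial produced by the lemma is a point the paper leaves implicit, but it is the same argument.
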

\vskip 10pt
\noindent
{\bf Acknowledgement.} \ We thank Prof. Weian Zheng for valuable
discussions during the preparation of this note.



\end{document}